\newtheorem{thm}{Theorem}[section]
\newtheorem{lem}[thm]{Lemma}
\theoremstyle{definition}
\numberwithin{equation}{section}
\begin{document}

\title[cyclic torsion of elliptic curves]{On the cyclic torsion of elliptic curves over cubic number fields (III)}


\author{Jian Wang}
\address{Jian Wang\\
College of Mathematics\\ Jilin Normal University\\
Siping, Jilin 136000\\
China}
\email{blandye@gmail.com}


\date{\today}

\subjclass[2010]{11G05, 11G18}

\keywords{torsion subgroup, elliptic curves, modular curves}


\maketitle


\begin{abstract}
This is the third part of a series of papers discussing the cyclic torsion subgroup of elliptic curves over cubic number fields. For $N=39$, we show that $\mathbb{Z}/N\mathbb{Z}$ is not a subgroup of $E(K)_{tor}$ for any elliptic curve $E$ over a cubic number field $K$.
\end{abstract}

\bigskip

In \cite{Wang18}, the author conjectured that $\mathbb{Z}/N\mathbb{Z}$ is not a cyclic torsion subgroup of the Mordell-Weil group of any elliptic curve over a cubic number field for 24 values of $N$:
$$22, 24, 25, 26, 27, 28, 30, 32, 33, 35, 36, 39, 40, $$$$42, 45, 49, 55, 63, 65, 77, 91, 121, 143, 169.$$

The cases $N=55, 65, 77, 91, 143, 169$ were proved in the first part \cite{Wang18} of this series of papers using refinements of a criterion originally due to Kamienny. The cases $N=22,25,40,49$ were proved in the second part \cite{Wang20} of this series of papers with the help of Kato's result about the Birch and Swinnerton-Dyer conjecture on modular abelian varieties. In this paper, we prove the $N=39$ case, which is Theorem \ref{T1}.

Let $N$ be a positive integer. Throughout this paper, $X_1(N)$ (resp. $X_0(N)$) denotes the modular curve over $\mathbb{Q}$ associated to the congruence subgroup $\Gamma_1(N)$ (resp. $\Gamma_0(N)$). $Y_1(N)$ (resp. $Y_0(N)$) denotes the corresponding affine curve without cusps. $J_1(N)$ (resp. $J_0(N)$) denotes the Jacobian of $X_1(N)$ (resp. $X_0(N)$).

Let $X_0(N)^{(d)}$ be the $d$-th symmetric power of $X_0(N)$. Suppose $K$ is a number field of degree $d$ over $\mathbb{Q}$ and $x\in X(K)$. Let $x_1,\cdots,x_d$ be the images of $x$ under the distinct embeddings $\tau_i:K\hookrightarrow\mathbb{C},1\leq i\leq d$. Define $$\Phi:X_0(N)^{(d)}\longrightarrow J_0(N)$$
by $\Phi(P_1+\cdots+P_d)=[P_1+\cdots+P_d-d\infty]$ where $[~~]$ denotes the divisor class.

Any point of $Y_1(N)$ is represented by $(E,\pm P)$, where $E$ is an elliptic curve and $P\in E$ is a point of order $N$. Any point of $Y_0(N)$ is represented by $(E, C)$, where $E$ is an elliptic curve and $C\subset E$ is a cyclic subgroup of order $N$. The covering map $\pi: X_1(N)\longrightarrow X_0(N)$ sends $(E,\pm P)$ to $(E,\langle P\rangle)$, where $\langle P\rangle$ is the cyclic subgroup generated by $P$.

Let $K$ be a number field with ring of integers $\mathcal{O}_K$, $\wp\subset\mathcal{O}_K$ a prime ideal lying above $p$, $k=\mathbb{F}_q=\mathcal{O}_K/\wp$ its residue field. Let $A$ be an abelian variety over $K$ and $P\in A(K)$ a point of order $N$. Let $\widetilde{A}$ be the fibre over $k$ of the N\'{e}ron model of $A$, and let $\widetilde{P}\in\widetilde{A}(k)$ be the reduction of $P$. The following lemma (see, for instance, \cite[\S 7.3 Proposition 3]{BoschLutkebohmertRaynaud}) shows that $\widetilde{P}$ has order $N$ when $p\nmid N$.

\begin{lem}\label{torsionreduction} Let $m$ be a positive integer relatively prime to $char(k)$. Then the reduction map
$$A(K)[m]\longrightarrow\widetilde{A}(k)$$
is injective.
\end{lem}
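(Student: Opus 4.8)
The plan is to prove the stronger statement that the kernel of the reduction map contains no nonzero element of order prime to $p:=\mathrm{char}(k)$; injectivity on $A(K)[m]$ then follows since $A(K)\subseteq A(K_\wp)$ and the reduction maps are compatible. First I would reduce to the case of a complete base: the inclusion $\mathcal{O}_K\hookrightarrow\mathcal{O}_{K_\wp}$ induces an isomorphism of residue fields, an inclusion $A(K)\hookrightarrow A(K_\wp)$, and commutes with reduction, so it suffices to work over $R:=\mathcal{O}_{K_\wp}$, a complete discrete valuation ring with maximal ideal $\mathfrak{m}$ and residue field $k$. By the N\'eron mapping property $A(K_\wp)=\mathcal{A}(R)$, where $\mathcal{A}$ is the N\'eron model over $R$, and the reduction map is $\mathcal{A}(R)\to\mathcal{A}(k)=\widetilde{A}(k)$; write $\mathcal{A}_1$ for its kernel.

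Next I would bring in the formal group of $\mathcal{A}$. Any $P\in\mathcal{A}(R)$ reducing to the identity factors through the open subgroup scheme $\mathcal{A}^0$ (the preimage of the identity components of the fibres): the locus in $\Spec R$ where $P$ meets $\mathcal{A}^0$ is open and contains the closed point, hence is all of $\Spec R$. Thus $\mathcal{A}_1=\ker(\mathcal{A}^0(R)\to\mathcal{A}^0(k))$. Since $\mathcal{A}^0$ is smooth over $R$ of relative dimension $g=\dim A$, its completion $\widehat{\mathcal{A}}$ along the identity section is a $g$-dimensional formal group over $R$, and evaluation of a system of formal parameters identifies $\mathcal{A}_1$ with $\widehat{\mathcal{A}}(\mathfrak{m})$.

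I would then exploit the $\mathfrak{m}$-adic filtration $\widehat{\mathcal{A}}(\mathfrak{m})\supseteq\widehat{\mathcal{A}}(\mathfrak{m}^2)\supseteq\cdots$. Because $R$ is complete and separated, $\bigcap_n\widehat{\mathcal{A}}(\mathfrak{m}^n)=0$ and $\mathcal{A}_1=\varprojlim_n\mathcal{A}_1/\widehat{\mathcal{A}}(\mathfrak{m}^n)$, and each successive quotient $\widehat{\mathcal{A}}(\mathfrak{m}^n)/\widehat{\mathcal{A}}(\mathfrak{m}^{n+1})$ is isomorphic, via the linear term of the formal group law, to $(\mathfrak{m}^n/\mathfrak{m}^{n+1})^{\oplus g}$, a finite-dimensional $k$-vector space and hence a finite abelian $p$-group. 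Therefore $\mathcal{A}_1$ is a pro-$p$ group, so any element of order $m$ with $\gcd(m,p)=1$ has trivial image in every finite quotient and must be $0$. (Equivalently: since $m\in R^\times$, the endomorphism $[m]$ of $\widehat{\mathcal{A}}$ equals $m\cdot\mathrm{id}$ plus higher-order terms and acts bijectively on $\widehat{\mathcal{A}}(\mathfrak{m})$ by successive approximation, whence $\mathcal{A}_1[m]=0$.) Hence $\mathcal{A}(R)[m]\cap\mathcal{A}_1=0$, the reduction map is injective on $m$-torsion, and a fortiori $A(K)[m]\hookrightarrow\widetilde{A}(k)$.

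The only substantive input is the construction of the formal group $\widehat{\mathcal{A}}$ of the smooth $R$-group scheme $\mathcal{A}^0$, the identification $\mathcal{A}_1\cong\widehat{\mathcal{A}}(\mathfrak{m})$, and the structure of the graded pieces of its $\mathfrak{m}$-adic filtration; this is precisely the content of \cite[\S 7.3]{BoschLutkebohmertRaynaud}, and for abelian varieties with good reduction (in particular for elliptic curves) it is classical. Everything else is a formal consequence.
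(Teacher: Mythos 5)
Your argument is correct and complete: passing to the completion, identifying the kernel of reduction with the points of the formal group of $\mathcal{A}^0$ with coordinates in $\mathfrak{m}$, and using the $\mathfrak{m}$-adic filtration (or, equivalently, invertibility of $[m]$ on the formal group since $m\in R^\times$) to show that kernel is pro-$p$ and hence has no prime-to-$p$ torsion. The paper offers no proof of its own, merely citing \cite[\S 7.3, Proposition 3]{BoschLutkebohmertRaynaud}, and your proof is precisely the standard argument behind that reference, so the two approaches coincide.
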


For a square-free $N$, the order $h_0(N)$ of $J_0(N)^c$ can be calculated by the following formula due to Takagi \cite{Takagi97}:
\begin{equation}\label{Takagi}h_0(N)=\frac{2^f12a_2a_3}{d}\prod_{\chi\neq1}\left(\frac{1}{24}\prod_{p|N}(p+\chi([p]))\right)\end{equation}
where $f$ is the number of the prime factors of $N=p_1\cdots p_f$, $d=(12,p_1-1,\cdots,p_f-1)$,
$a_2=2$ if $2|N$ and $N$ has a prime factor $p$ with $p\equiv3\mod4$, $a_2=1$ otherwise,
$a_3=3$ if $3|N$ and $N$ has a prime factor $p$ with $p\equiv2\mod3$, $a_3=1$ otherwise, $\chi$ runs through all nontrivial characters of the group $T$ (consisting of all the positive divisors of $N$ and isomorphic to $(\mathbb{Z}/2\mathbb{Z})^f$ ) and $p$ runs through all prime factors of $N$. 

Of the remaining 14 cases of $N$, 6 are square-free. The order $h_0(N)$ of $J_0(N)^c$ is calculated and listed in Table \ref{order}.

\begin{table}[!ht]
\tabcolsep 0pt
\vspace*{0pt}
\begin{center}
\def\temptablewidth{1\textwidth}
\setlength{\abovecaptionskip}{0pt}
\setlength{\belowcaptionskip}{-5pt}
\caption{The order $h_0(N)$ of $J_0(N)^c$}
\label{order}
{\rule{\temptablewidth}{1pt}}
\begin{tabular*}{\temptablewidth}{@{\extracolsep{\fill}}ccccccccccccc}
~~~~$N$~~~~&$26$&$30$&$33$&$35$&$39$&$42$\\\hline
~~~~$h_0(N)$~~~~&$3\cdot7$&$2^6\cdot3$&$2^2\cdot5^2$&$2^4\cdot3$&$2^3\cdot7$&$2^8\cdot3^2$\\\hline

\end{tabular*}
{\rule{\temptablewidth}{1pt}}
\end{center}
\end{table}


The following specialization lemma follows from the classification of Oort-Tate \cite{OortTate} on finite flat group schemes of rank $p$ (or more generally the classification of finite flat group schemes of type $(p,\cdots,p)$ by Raynaud \cite{Raynaud}). If the group scheme is contained in an abelian variety, this lemma follows from elementary properties of formal Lie groups (see, for example, the Appendix of Katz\cite{Katz}).

\begin{lem}[Specialization Lemma]\label{Katz} Let $K$ be a number field. Let $\wp\subset\mathcal{O}_K$ be a prime above $p$. Let $A/K$ be an abelian variety. Suppose the ramification index $e_\wp(K/\mathbb{Q})<p-1$. Then the reduction map
$$\Psi: A(K)_{tor}\longrightarrow A(\overline{\mathbb{F}}_p)$$
is injective.
\end{lem}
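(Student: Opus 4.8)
The plan is to reduce the statement to the torsion-freeness of a formal group over a complete discrete valuation ring. It is enough to treat the case $K=K_\wp$, since a nonzero torsion point of $A(K)$ lying in $\Ker(\Psi)$ would remain a nonzero torsion point of $A(K_\wp)$ reducing to the identity; so assume $K$ is complete, and write $\mathcal{O}=\mathcal{O}_K$, $\mathfrak{m}\subset\mathcal{O}$ for the maximal ideal, $e=e_\wp(K/\mathbb{Q})$ for the absolute ramification index, and $k\hookrightarrow\overline{\mathbb{F}}_p$ for the residue field. Let $\mathcal{A}/\mathcal{O}$ be the N\'eron model of $A$, so that $\mathcal{A}$ is a smooth commutative group scheme with $\mathcal{A}(\mathcal{O})=A(K)$ by the N\'eron mapping property, and $\Psi$ factors through the reduction map $r\colon\mathcal{A}(\mathcal{O})\to\widetilde{A}(k)$ onto the special fibre. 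It therefore suffices to show that $\Ker(r)$ contains no nonzero torsion element.

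First I would identify $\Ker(r)$ with the group of $\mathfrak{m}$-valued points of a formal group. Since $\mathcal{A}$ is smooth over $\mathcal{O}$ of relative dimension $g=\dim A$, completing along the identity section yields a $g$-dimensional formal group $\widehat{\mathcal{A}}$ over $\mathcal{O}$, and passing to the limit over infinitesimal neighbourhoods (using that $\mathcal{O}$ is complete) gives $\Ker(r)=\widehat{\mathcal{A}}(\mathfrak{m})$. Then I would invoke the formal logarithm: there are mutually inverse power series $\log$ and $\exp$ over $K$ linearizing the formal group law, whose denominators are controlled because the only problematic coefficients sit in degrees $p^{j}$ and contribute valuation at worst $-je$, while substituting a point of valuation $\geq s$ into a monomial of degree $p^{j}$ produces valuation $\geq p^{j}s$. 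Hence $\log$ and $\exp$ converge and define mutually inverse isomorphisms
$$\log\colon\widehat{\mathcal{A}}(\mathfrak{m}^{s})\ \xrightarrow{\ \sim\ }\ \mathfrak{m}^{s}\ \ (\text{additive group}),\qquad s>\tfrac{e}{p-1}.$$
Because $e<p-1$ we have $\tfrac{e}{p-1}<1$, so $s=1$ is admissible and $\widehat{\mathcal{A}}(\mathfrak{m})\cong\mathfrak{m}^{g}$ as abelian groups. As $\mathcal{O}$ has characteristic zero, $\mathfrak{m}^{g}$ is torsion-free, hence so is $\Ker(r)$; thus no nonzero torsion point of $A(K)$ reduces to the identity, i.e. $\Psi$ is injective on $A(K)_{tor}$.

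The one place the hypothesis is used is the convergence and mutual inversion of $\log$ and $\exp$ on $\mathfrak{m}$-valued points, which holds precisely when $\ord_\wp(p)=e<p-1$; this is exactly the content of the appendix of Katz \cite{Katz}, and it is the main technical point. One could instead argue $\ell$-by-$\ell$: for $\ell\neq p$, injectivity of the reduction on $A(K)[\ell^{n}]$ is already Lemma \ref{torsionreduction} and needs no hypothesis on $e$; for the $p$-primary part one only needs that $\widehat{\mathcal{A}}(\mathfrak{m})$ has no $p$-torsion, which follows from the same logarithm, or --- avoiding formal groups entirely, as indicated in the remark preceding the lemma --- from the Oort--Tate \cite{OortTate} and Raynaud \cite{Raynaud} classifications of finite flat group schemes of type $(p,\dots,p)$.
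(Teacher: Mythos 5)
Your argument is correct, and it is essentially the route the paper itself takes: the paper gives no independent proof but attributes the lemma to the formal Lie group argument in the appendix of Katz \cite{Katz} (or the Oort--Tate/Raynaud classifications), and your reduction to the kernel of reduction being $\widehat{\mathcal{A}}(\mathfrak{m})$ together with the logarithm estimate for $e<p-1$ is exactly that argument spelled out.
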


Using the same method as that in \cite{Wang20} to show the finiteness of $J_1(N)(\mathbb{Q})$, we can verify that $J_0(N)(\mathbb{Q})$ is finite for certain $N$. For the 24 cases we are interested in, Table \ref{jacobian} is the result of calculations in Magma \cite{Magma}. The second column $t$ is the number of non-isogenous modular abelian varieties in the decomposition $J_0(N)=\bigoplus_{i=1}^tA_i^{m_i}$. The third column list the dimension $d_i$ and multiplicity $m_i$ of each $A_i$ (we omit $m_i$ if $m_i=1$). The fourth column verifies non-vanishing of $L$-series at $1$ (a mark $T$ means $L(A_i,1)\neq0$ is verified, otherwise we place a mark $\textbf{\emph{F}}$).

\begin{table}[!ht]
\tabcolsep 0pt
\vspace*{0pt}
\begin{center}
\def\temptablewidth{1\textwidth}
\setlength{\abovecaptionskip}{0pt}
\setlength{\belowcaptionskip}{-5pt}
\caption{Decompostion of $J_0(N)$}
\label{jacobian}
{\rule{\temptablewidth}{1pt}}
\begin{tabular*}{\temptablewidth}{@{\extracolsep{\fill}}ccccccccccccc}
~~~~$N$~~~~&$t$&$d_i(m_i)$&$L(A_i,1)\neq0$\\\hline
$169$&$3$&$2,3,3$&$T,\textbf{\emph{F}},T$\\
$121$&$6$&$1,1,1,1,1,1$&$\textbf{\emph{F}},T,T,T,T,T$\\
$49$&$1$&$1$&$T$\\
$25$&$1$&$0$&$-$\\
$27$&$1$&$1$&$T$\\
$32$&$1$&$1$&$T$\\
$143$&$5$&$1,4,6,1,1$&$\textbf{\emph{F}},T,T,T,T$\\
$91$&$4$&$1,1,2,3$&$\textbf{\emph{F}},\textbf{\emph{F}},T,T$\\
$65$&$3$&$1,2,2$&$\textbf{\emph{F}},T,T$\\
$39$&$2$&$1,2$&$T,T$\\
$26$&$2$&$1,1$&$T,T$\\
$77$&$6$&$1,1,1,2,1,1$&$\textbf{\emph{F}},T,T,T,T,T$\\
$55$&$4$&$1,2,1,1$&$T,T,T$\\
$33$&$3$&$1,1,1$&$T,T,T$\\
$22$&$2$&$1,1$&$T,T$\\
$35$&$2$&$1,2$&$T,T$\\
$63$&$4$&$1,2,1,1$&$T,T,T,T$\\
$42$&$5$&$1,1,1,1,1$&$T,T,T,T,T$\\
$28$&$2$&$1,1$&$T,T$\\
$45$&$3$&$1,1,1$&$T,T,T$\\
$30$&$3$&$1,1,1$&$T,T,T$\\
$40$&$3$&$1,1,1$&$T,T,T$\\
$36$&$1$&$1$&$T$\\
$24$&$1$&$1$&$T$\\

\end{tabular*}
{\rule{\temptablewidth}{1pt}}
\end{center}
\end{table}

\begin{thm}\label{T1} If $N=39$, then $\mathbb{Z}/N\mathbb{Z}$ is not a subgroup of $E(K)_{tor}$ for any elliptic curve $E$ over a cubic number field $K$.
\end{thm}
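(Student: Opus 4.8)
The plan is to run a Kamienny-style argument on the quotient curve $X_0(39)$, using that $J_0(39)$ has finite Mordell--Weil group over $\mathbb{Q}$. A $K$-rational point $P$ of order $39$ on $E$ produces a non-cuspidal point $x\in Y_1(39)(K)$ and, via $\pi$, a non-cuspidal point $y\in Y_0(39)(K)$ representing $(E,\langle P\rangle)$. Since $[K:\mathbb{Q}]=3$ is prime, $y$ has exact field of definition $K$: otherwise $y\in X_0(39)(\mathbb{Q})$, which consists only of the four rational cusps because no elliptic curve over $\mathbb{Q}$ has a rational cyclic $39$-isogeny (Mazur, Kenku), contradicting the non-cuspidality of $y$. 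Thus the images $y_1,y_2,y_3$ of $y$ under the embeddings $\tau_i:K\hookrightarrow\mathbb{C}$ are three distinct non-cuspidal points forming a single Galois orbit, and $D:=y_1+y_2+y_3$ is a $\mathbb{Q}$-rational effective divisor of degree $3$ with $\Phi(D)=[D-3\infty]\in J_0(39)(\mathbb{Q})$. By the decomposition $J_0(39)=A_1\oplus A_2$ of Table~\ref{jacobian}, the nonvanishing $L(A_1,1)\neq0$ and $L(A_2,1)\neq0$, and the theorem of Kolyvagin--Logachev, $J_0(39)(\mathbb{Q})$ is finite, so $\Phi(D)$ is a torsion class.

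The next step is to use that $X_0(39)$ is hyperelliptic of genus $3$, with its (necessarily $\mathbb{Q}$-rational) hyperelliptic class $g^1_2$. By Riemann--Roch, any effective divisor $D$ of degree $3$ has $h^0(D)\leq2$, with equality exactly when $D\sim g^1_2+R$ for a point $R$; if $D$ is $\mathbb{Q}$-rational then so is the class of $R$, whose unique effective representative is a $\mathbb{Q}$-rational point and hence a cusp. Moreover $h^0(g^1_2+R)=h^0(g^1_2)=2$, so $R$ is a base point of $|g^1_2+R|$, and every effective divisor in this linear system contains the rational point $R$. A transitive degree-$3$ Galois orbit contains no rational point, so $h^0(D)=1$: the divisor $D$ is rigid, hence determined by its class $[D]=[3\infty]+\Phi(D)$. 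In particular there are only finitely many candidates for $D$.

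It then remains to enumerate these candidates and eliminate them. First one bounds $J_0(39)(\mathbb{Q})$ from above by reducing modulo several primes $p$ of good reduction --- Lemma~\ref{Katz} injects $J_0(39)(\mathbb{Q})$ into $J_0(39)(\overline{\mathbb{F}}_p)$ for $p\geq3$, and Lemma~\ref{torsionreduction} controls the odd part at $p=2$ --- and compares with the cuspidal lower bound $h_0(39)=2^3\cdot7$ from Table~\ref{order}, which should pin $J_0(39)(\mathbb{Q})$ down exactly. For each class $c\in J_0(39)(\mathbb{Q})$, one uses an explicit model of $X_0(39)$ computed in Magma~\cite{Magma} to decide whether $|3\infty+c|$ is nonempty and, when $h^0(3\infty+c)=1$, whether its unique effective divisor is a genuine non-cuspidal cubic point lying on $Y_0(39)$; the classes with $h^0(3\infty+c)=2$ are precisely the moving families $|g^1_2+R|$ already excluded. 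Showing that no surviving divisor can arise from an elliptic curve with a $K$-rational point of order $39$ yields the contradiction. In the course of this one also uses that the reduction $\widetilde{P}$ of $P$ retains order $39$ (Lemmas~\ref{torsionreduction} and~\ref{Katz}, the latter applicable since $e_\wp(K/\mathbb{Q})\leq3<p-1$ for $p\geq5$), which constrains the possible reductions of the $y_i$.

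The main obstacle is this final elimination, together with the arithmetic input it needs. Bounding $J_0(39)(\mathbb{Q})$ is routine for the elliptic factor $A_1$ but for the two-dimensional factor $A_2$ may call for a Birch--Swinnerton-Dyer-type estimate of the sort used in \cite{Wang20}. More seriously, the argument as set up only constrains cyclic $39$-isogenies, so if the enumeration should return a genuine non-cuspidal cubic point on $X_0(39)$, one must lift the entire analysis along the degree-$12$ covering $\pi:X_1(39)\to X_0(39)$ --- where the genus is far larger and the interaction of the cusps with the reduction map must be handled carefully --- in order to rule out a point of exact order $39$ rather than merely a $39$-isogeny.
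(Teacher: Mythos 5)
Your setup overlaps with the paper's (finiteness of $J_0(39)(\mathbb{Q})$ from the decomposition in Table \ref{jacobian}, the $\mathbb{Q}$-rational degree-$3$ divisor $D=y_1+y_2+y_3$ with torsion class $\Phi(D)$, and the careful use of the hyperelliptic geometry of $X_0(39)$ to show $D$ is rigid unless it contains a rational point), but the proof is not closed: your final step is an unexecuted enumeration ``decide for each torsion class $c$ whether $|3\infty+c|$ contains a non-cuspidal cubic point and then eliminate it,'' and you yourself concede that this elimination may be impossible on $X_0(39)$ alone, since your argument up to that point only uses the cyclic $39$-isogeny, so a genuine non-cuspidal cubic point of $X_0(39)$ (which your framework does not exclude, and which rigidity plus a torsion group of order at least $2^3\cdot 7$ leaves ample room for) would force you to redo everything on $X_1(39)$. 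That fallback is not carried out either, so the decisive idea is missing.

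The paper closes exactly this gap while staying on $X_0(39)$, by feeding the full point of order $39$ (not just the isogeny) into a reduction argument at $p=3$. Choosing a prime $\wp\mid 3$ of $K$ with residue degree $1$ or $3$, the Hasse bound forbids good reduction: in residue degree $3$ one has $e_\wp=1<p-1$, so by Lemma \ref{Katz} the order-$39$ point would inject into $\widetilde{E}(\mathbb{F}_{27})$, impossible since $39>(1+\sqrt{27})^2$; in residue degree $1$, Lemma \ref{torsionreduction} injects the prime-to-$3$ part of order $13>(1+\sqrt{3})^2$. Hence $E$ has multiplicative reduction at $\wp$ and at its conjugates, so each $y_i$ specializes to a cusp $c_i$ of $\widetilde{X}_0(39)$. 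Since the cusps are defined over $\mathbb{Q}(\zeta_{39})$, the cuspidal group has order $2^3\cdot 7$ prime to $3$ (Table \ref{order}), $[y_1+y_2+y_3-3\infty]$ is torsion by finiteness of $J_0(39)(\mathbb{Q})$, and $[c_1+c_2+c_3-3\infty]$ is torsion by Manin--Drinfeld, the injectivity of reduction on torsion lifts the congruence to an equality of classes $[y_1+y_2+y_3-3\infty]=[c_1+c_2+c_3-3\infty]$ in characteristic zero; the gonality argument (which you in fact treat more explicitly than the paper, whose appeal to non-trigonality silently skips the hyperelliptic case $D\sim g^1_2+R$ that you handle via the rational base point) then gives $y_1+y_2+y_3=c_1+c_2+c_3$ as divisors, contradicting non-cuspidality. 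This reduction-to-cusps mechanism is precisely where the order-$39$ hypothesis acts on $X_0(39)$, and it is the ingredient your proposal lacks; without it, no finite enumeration of torsion classes by itself can rule out cubic points of $X_0(39)$ coming from $39$-isogenies without order-$39$ points.
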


\begin{proof}

Let $p=3$ and $N'=N/p$. Let $K$ be a cubic field and $\wp$ a prime of $K$ over $p$. As shown in the proof of Lemma 3.6 of \cite{Wang18}, we can always choose $\wp$ such that the residue field $k=\mathcal{O}_K/\wp$ has degree $1$ or $3$ over $\mathbb{F}_p$. Suppose $x=\pi(E,\pm P)\in Y_0(N)(K)$, then $E$ cannot have additive reduction since $N>4$.

If $k$ has degree $3$, then $e_\wp(K/\mathbb{Q})=1<p-1$, by Lemma \ref{Katz}, $E$ has multiplicative reduction since $N>(1+\sqrt{3^3})^2$. If $k$ has degree $1$, by Lemma \ref{torsionreduction}, $E$ also has multiplicative reduction since $N'>(1+\sqrt{3})^2$.

Since $E$ has multiplicative reduction at $\wp$, then $x$ specializes to a cusp of $\widetilde{X}_0(N)$. Recall the notation of $\tau_i$ and $x_i$, $1\leq i\leq 3$. Then $\tau_i(K)$ is also a cubic field with prime ideal $\tau_i(\wp)$ over $p$ and residue field $k_i=k$. And $\tau_i(E)$ also has multiplicative reduction at $\tau_i(\wp)$. This means all the images $x_1,x_2, x_3$ of $x$ specialize to cusps of $\widetilde{X}_0(N)$. Let $c_1,c_2, c_3$ be the cusps such that
$$x_i\otimes\overline{\mathbb{F}}_p=c_i\otimes\overline{\mathbb{F}}_p, ~~~~~1\leq i\leq 3$$

We know all the cusps of $X_0(N)$ are defined over $\mathbb{Q}(\zeta_N)$ \cite{Ogg73}. As is seen in Table \ref{order}, the order of $J_0(N)^c$ is $2^3\cdot7$. Therefore by Lemma \ref{torsionreduction}, the specialization map
$$\Psi: J_0(N)(\mathbb{Q}(\zeta_N))_{tor}\longrightarrow J_0(N)(\overline{\mathbb{F}}_p)$$
is injective.

We know $x_1+x_2+x_3$ is $\mathbb{Q}$-rational. The data in Table \ref{jacobian} show the finiteness of $J_0(N)(\mathbb{Q})$. So $[x_1+x_2+x_3-3\infty]$  is in $J_0(N)(\mathbb{Q}(\zeta_N))_{tor}$. By a theorem of Manin \cite{Manin} and Drinfeld \cite{Drinfeld}, the difference of two cusps of $X_0(N)$ has finite order in $J_0(N)$. So $[c_1+c_2+c_3-3\infty]$ is also in $J_0(N)(\mathbb{Q}(\zeta_N))_{tor}$. Therefore $$\Psi([x_1+x_2+x_3-3\infty])=\Psi([c_1+c_2+c_3-3\infty])$$ implies $$[x_1+x_2+x_3-3\infty]=[c_1+c_2+c_3-3\infty]$$

Since $X_0(N)$ is not trigonal \cite{HasegawaShimura}, then similar reasoning as in the proof of Proposition 1 in Frey \cite{Frey} shows that $x_1+x_2+x_3=c_1+c_2+c_3$. This is a contradiction because we assume $x$ is a noncuspidal point.

\end{proof}

\bigskip



\begin{thebibliography}{xx}

\bibitem{BoschLutkebohmertRaynaud} \textsc{S. Bosch, W. L\"{u}tkebohmert \& M. Raynaud},
\textit{N\'{e}ron models}.
Ergebnisse der Mathematik und ihrer Grenzgebiete (3) {\bf21}, Springer-Verlag, Berlin, 1990.



\bibitem{Drinfeld} \textsc{V. G. Drinfeld},
\textit{Two theorems on modular curves}.
Funkcional. Anal. i Prilo\v{z}en. {\bf7} (1973), no. 2, 83--84.

\bibitem{Frey} \textsc{G. Frey},
\textit{Curves with infinitely many points of fixed degree}.
Israel J. Math. {\bf85} (1994), no. 1--3, 79--83.

\bibitem{HasegawaShimura} \textsc{Y. Hasegawa \& M. Shimura},
\textit{Trigonal modular curves}.
Acta Arith. {\bf88} (1999), no. 2, 129--140.






\bibitem{Katz}  \textsc{N. M. Katz},
\textit{Galois properties of torsion points on abelian varieties}.
Invent. Math. {\bf62} (1981), no. 3, 481--502.



\bibitem{Magma} Magma: http://magma.maths.usyd.edu.au/magma/

\bibitem{Manin} \textsc{Y. I. Manin},
\textit{Parabolic points and zeta functions of modular curves}.
Izv. Akad. Nauk SSSR Ser. Mat. {\bf36} (1972), 19--66.


%

\bibitem{Ogg73} \textsc{A. P. Ogg},
\textit{Rational points on certain elliptic modular curves}.
Analytic number theory (Proc. Sympos. Pure Math., Vol XXIV, St. Louis Univ., St. Louis, Mo., 1972), pp. 221--231. Amer. Math. Soc., Providence, R.I., 1973.


%

\bibitem{Raynaud} \textsc{M. Raynaud},
\textit{Sch\'{e}mas en groupes de type $(p,\cdots,p)$}.
Bull. Sco. Math. France {\bf102} (1974), 241--280.

%

\bibitem{OortTate} \textsc{J. Tate \& F. Oort},
\textit{Group schemes of prime order}.
Ann. Sci. \'{E}cole Norm. Sup. (4) {\bf3} (1970), 1--21.

\bibitem{Takagi97} \textsc{T. Takagi},
\textit{The cuspidal class number formula for the modular curves $X_0(M)$ with $M$ square-free}.
J. Algebra {\bf193} (1997), 180--213.

\bibitem{Wang18} \textsc{J. Wang},
\textit{On the cyclic torsion of elliptic curves over cubic number fields}.
J. Number Theory {\bf183} (2018), 291--308.

\bibitem{Wang20} \textsc{J. Wang},
\textit{On the cyclic torsion of elliptic curves over cubic number fields (II)}.
The Journal de Th\'{e}orie des Nombres de Bordeaux {\bf31} (2019), 663--670.


\end{thebibliography}
\end{document}